\newtheorem{theorem}{Theorem}
\newtheorem{conj}{Conjecture}
\newtheorem{proposition}{Proposition}
\newtheorem*{conj*}{LCGD Conjecture}
\title{Genus Polynomials of Cubic Graphs with Non-Real Roots}
\author{MacKenzie Carr\thanks{Supported by the Natural Sciences and Engineering Research Council of Canada (NSERC) Canadian Graduate Scholarship 456422823. Email: {\tt mackenzie.carr@torontomu.ca}}
\qquad
Varpreet Dhaliwal\thanks{Supported by the NSERC Undergraduate Student Research Award 521692. Email: {\tt varpreet\_dhaliwal@sfu.ca}}
\qquad
Bojan Mohar\thanks{Supported in part by the NSERC Discovery Grant R611450 (Canada) and by the Research Project J1-2452 of ARRS (Slovenia). On leave from IMFM, Jadranska 19, 1000 Ljubljana. Email: {\tt mohar@sfu.ca}}\\[3mm]
Department of Mathematics\\
Simon Fraser University\\
Burnaby, BC~~V5A~1S6, Canada}
\date{\today}
\begin{document}

\maketitle

\begin{abstract}
Given a graph $G$, its genus polynomial is $\Gamma_G(x) = \sum_{k\geq 0} g_k(G)x^k$, where $g_k(G)$ is the number of 2-cell embeddings of $G$ in an orientable surface of genus $k$. The Log-Concavity Genus Distribution (LCGD) Conjecture states that the genus polynomial of every graph is log-concave. It was further conjectured by Stahl that the genus polynomial of every graph has only real roots, however this was later disproved. We identify several examples of cubic graphs whose genus polynomials, in addition to having at least one non-real root, have a quadratic factor that is non-log-concave when factored over the real numbers. 
\end{abstract}

\section{Introduction}

Let $G$ be a connected graph. We denote by $g_k(G)$ the number of 2-cell embeddings of $G$ in an orientable surface of genus $k$. Here we count 2-cell embeddings up to combinatorial equivalence: two 2-cell embeddings are equivalent if their facial walks are the same. Equivalently, their clockwise rotation systems are the same. This is also equivalent to saying that there is an orientation-preserving homeomorphism among the surfaces that induces identity on the embedded graph.
We refer to \cite{MoharThomassen} for details. The \emph{genus polynomial} of $G$ is defined as 
$$\Gamma_G(x) = \sum_{k\geq 0} g_k(G) x^k.$$

The following conjecture was proposed in 1989 by Gross, Robbins and Tucker \cite{GrossRobbinsTucker}. It is known as the Log-Concavity Genus Distribution (LCGD) Conjecture:

\begin{conj*}
[\cite{GrossRobbinsTucker}] For every graph $G$, the genus polynomial is log-concave. 
\end{conj*}

Let us recall that the polynomial $\sum_{k\geq 0} g_k(G)x^k$ is \emph{log-concave} if the sequence $g_0(G),g_1(G),\dots$ of its coefficients is log-concave, meaning that $(g_k(G))^2 \geq g_{k-1}(G)g_{k+1}(G)$, for every $k\in \mathbb{N}$. 

The LCGD Conjecture has been confirmed for various classes of graphs and shown to be preserved under several graph amalgamation operations. However, the conjecture remains widely open for general graphs. In \cite{Stahl}, Stahl conjectured the following stronger property of genus polynomials. 

\begin{conj}
For every graph $G$, the genus polynomial $\Gamma_G(x)$ has only real roots. 
\end{conj}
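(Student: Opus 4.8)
The conjecture as written is false — the title of this paper already advertises cubic counterexamples — so the genuine task is to \emph{refute} it, and to do so in the strong form where the genus polynomial, factored over $\mathbb{R}$, has an irreducible quadratic factor that is itself non-log-concave. The plan is to produce an explicit infinite family of cubic graphs $G_n$, compute $\Gamma_{G_n}(x)$ in closed form, and then locate non-real roots and, more precisely, a non-log-concave quadratic factor. (If one only wanted \emph{some} counterexamples, a brute-force search would already suffice: enumerate all cubic graphs on up to roughly twenty vertices, compute each genus polynomial by running over all $2^{|V|}$ rotation systems and counting faces via Euler's formula, and test the roots. The structured route below is preferable because it yields infinitely many examples and explains why they occur.)

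\textbf{Setup via transfer matrices.} I would take $G_n$ to be a ``necklace'': $n$ copies of a fixed small cubic gadget $H$, each glued to the next along a fixed set of two or three edges, with the chain closed up into a cubic graph. For such amalgamations there is standard machinery of partial (partitioned) genus polynomials: one refines the genus count $g_k$ of a rooted piece according to how its root edges meet the faces of an embedding — on a common face or on distinct faces, and for three roots according to the cyclic order in which the edges occur around the faces they bound. Gluing on one more copy of $H$ acts on this finite vector of partial genus polynomials by a fixed \emph{transfer matrix} $M(x)$ with low-degree polynomial entries, the variable $x$ recording the genus increment forced when root edges on distinct faces are identified. Hence
\[
\Gamma_{G_n}(x) \;=\; \mathbf{u}(x)^{\top}\, M(x)^{\,n}\, \mathbf{v}(x)
\]
for fixed vectors $\mathbf{u}(x), \mathbf{v}(x)$ determined by the closure. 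The first concrete step is to choose $H$ and the gluing so that $M(x)$ is $2\times 2$ or $3\times 3$, compute it by hand, and \emph{validate} it by brute-force enumeration of rotation systems for $G_1, G_2, G_3$.

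\textbf{Extracting the non-real (non-log-concave) factor.} With the closed form in hand, two complementary routes are available. Elementarily: for several small $n$, expand $\Gamma_{G_n}(x)$, factor it over $\mathbb{Q}$ (hence over $\mathbb{R}$), and inspect each irreducible quadratic factor $a_0 + a_1 x + a_2 x^2$; such a factor necessarily has $a_0 a_2 > 0$, has non-real roots exactly when $a_1^2 < 4 a_0 a_2$, and is non-log-concave exactly when $a_1^2 < a_0 a_2$ — so one searches the (gadget, length) parameter space for the strict inequality $a_1^2 < a_0 a_2$, which is precisely what the strong claim requires. Structurally, to see why such examples must exist: by a Beraha--Kahane--Weiss type limit theorem the zeros of $\mathbf{u}(x)^{\top} M(x)^n \mathbf{v}(x)$ accumulate, as $n \to \infty$, on the locus where the two dominant eigenvalues $\lambda_1(x), \lambda_2(x)$ of $M(x)$ satisfy $|\lambda_1(x)| = |\lambda_2(x)|$, an algebraic curve that is generically not a subset of $\mathbb{R}$; hence for large $n$ some zeros of $\Gamma_{G_n}$ are driven off the real axis. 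One would then also check directly that $\Gamma_{G_n}(x)$ itself remains log-concave, so that the example leaves the LCGD Conjecture untouched while demonstrating that log-concavity of a genus polynomial need not descend to its real factors.

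\textbf{Main obstacle.} The crux is the first step: getting $M(x)$ right. Tracking the cyclic arrangement of three root edges around the faces they bound, and how a fresh copy of $H$ permutes and merges these incidence classes, is delicate, and one indexing or sign slip corrupts every $G_n$ — so the cross-check against exhaustive rotation-system enumeration for small $n$ is essential, not optional. The secondary difficulty is that arranging a non-real root is easy but arranging a genuinely non-log-concave quadratic factor is the finer condition $a_1^2 < a_0 a_2$ rather than merely $a_1^2 < 4 a_0 a_2$, so some search over gadgets $H$ and lengths $n$, guided by the eigenvalue-crossing picture, is to be expected before a clean witness appears.
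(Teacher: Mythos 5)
The statement you are addressing is a conjecture that the paper refutes rather than proves, and you correctly identify refutation as the real task; your algebraic criteria are also right (an irreducible real quadratic $a_2x^2+a_1x+a_0$ has non-real roots iff $a_1^2<4a_0a_2$ and is non-log-concave iff $a_1^2<a_0a_2$, which is exactly the cone condition of Proposition 1). The genuine gap is that your proposal never actually produces a counterexample: no gadget $H$ is chosen, no transfer matrix $M(x)$ is computed, no polynomial is exhibited, and no root is located, so the conjecture is not refuted by what you have written. A refutation of a universally quantified statement requires a witness, and the paper supplies several --- the two graphs of Chen and Liu, the 226 cubic graphs on at most 16 vertices found by exhaustive search, and six explicit graphs (e.g.\ the generalized Petersen graph $G(8,2)$ with $\Gamma_{G(8,2)}(x)=39840x^4+23536x^3+2074x^2+84x+2$ and non-real roots $\approx -0.0200\pm 0.0371i$) whose real factorizations contain a non-log-concave quadratic. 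The route the paper actually takes is precisely the brute-force enumeration you dismiss in a single parenthesis.

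Your preferred route --- a transfer-matrix closed form for a linear family, with a Beraha--Kahane--Weiss limit argument pushing zeros off the real axis --- is attractive but is exactly where this problem has historically gone wrong: Liu and Wang claimed a counterexample from the $W_4$-linear family, and Chen showed their genus generating matrix was erroneous and that the corrected matrix does not support the argument. The paper explicitly states that the scarcity of graph classes with known genus distributions is what prevents it from exhibiting infinite families, so the ``main obstacle'' you flag is not merely delicate but is currently the open end of the subject. Two smaller points: factoring over $\mathbb{Q}$ does \emph{not} yield the factorization over $\mathbb{R}$ --- the relevant quadratic factors here have irrational coefficients and must be extracted numerically from the roots, as the paper does --- and the heuristic that the equimodularity curve $|\lambda_1(x)|=|\lambda_2(x)|$ is ``generically'' non-real is not a proof for any particular gadget; it would have to be verified for the specific $M(x)$ you construct. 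As it stands, your text is a plausible research plan, not a refutation.
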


Stahl went on to prove that the conjecture holds for particular classes of graphs including bouquets $B_n$, dipoles $DP_n$, and cobblestone paths, as well as offer examples of graph classes whose genus generating matrices are known but whose genus polynomials hadn't been proven to have real roots. Liu and Wang \cite{LiuWang} used one such example, the class of $W_4$-linear graphs, to disprove Stahl's conjecture, but Chen \cite{Chen} discovered an error in the genus generating matrix used in this counterexample. Chen showed that the argument of Liu and Wang does not hold using the correct genus generating matrix for $W_4$-linear graphs. In addition, Chen verified that the genus polynomials of all graphs with maximum genus 2 are real-rooted. 

Stahl's conjecture remained open until 2010, at which time Chen and Liu \cite{ChenLiu} identified two graphs, shown in Figure~\ref{fig:1}, whose genus polynomials both have non-real roots. 

\begin{figure}[ht]
    \centering
    \begin{subfigure}[t]{0.3\textwidth}
    \begin{tikzpicture}[thick,every node/.style={circle, draw=black, fill=white, inner sep=1}]
		\node  (0) at (1.5*0.30902,1.5*0.95106) {};
		\node  (1) at (1.5*-0.80902,1.5*0.58779) {};
		\node  (2) at (1.5*-0.80902,1.5*-0.58779) {};
		\node  (3) at (1.5*0.30902,1.5*-0.95106) {};
		\node  (4) at (1.5, 0) {};
		\node  (5) at (0.85*0.30902,0.85*0.95106) {};
		\node  (6) at (0.85*-0.80902,0.85*0.58779) {};
		\node  (7) at (0.85*-0.80902,0.85*-0.58779) {};
		\node  (8) at (0.85*0.30902,0.85*-0.95106) {};
		\node  (9) at (0.85*1,0) {};

		\draw (1) to (0);
		\draw (0) to (4);
		\draw (4) to (3);
		\draw (3) to (2);
		\draw (2) to (1);
		\draw (1) to (6);
		\draw (6) to (5);
		\draw (5) to (0);
		\draw (5) to (9);
		\draw (9) to (4);
		\draw (9) to (8);
		\draw (8) to (3);
		\draw (8) to (7);
		\draw (7) to (2);
		\draw (7) to (6);
\end{tikzpicture}
\end{subfigure}
\hspace{5mm}
\begin{subfigure}[t]{0.3\textwidth}
\begin{tikzpicture}[thick,every node/.style={circle, draw=black, fill=white, inner sep=1}]
		\node  (0) at (0.6*-1,0.6*2.4142) {};
		\node  (1) at (0.6*1,0.6*2.4142) {};
		\node  (2) at (0.6*-2.4142,0.6*1) {};
		\node  (3) at (0.6*-2.4142,0.6*-1) {};
		\node  (4) at (0.6*-1,0.6*-2.4142) {};
		\node  (5) at (0.6*1,0.6*-2.4142) {};
		\node  (6) at (0.6*2.4142,0.6*-1) {};
		\node  (7) at (0.6*2.4142,0.6*1) {};
		\node  (8) at (0.35*-1,0.35*2.4142) {};
		\node  (9) at (0.35*1,0.35*2.4142) {};
		\node  (10) at (0.35*2.4142,0.35*1) {};
		\node  (11) at (0.35*2.4142,0.35*-1) {};
		\node  (12) at (0.35*1,0.35*-2.4142) {};
		\node  (13) at (0.35*-1,0.35*-2.4142) {};
		\node  (14) at (0.35*-2.4142,0.35*-1) {};
		\node  (15) at (0.35*-2.4142,0.35*1) {};
	
		\draw (2) to (0);
		\draw (0) to (1);
		\draw (1) to (7);
		\draw (7) to (6);
		\draw (6) to (5);
		\draw (5) to (4);
		\draw (4) to (3);
		\draw (3) to (2);
		\draw (0) to (8);
		\draw (8) to (15);
		\draw (15) to (2);
		\draw (14) to (15);
		\draw (14) to (3);
		\draw (14) to (13);
		\draw (13) to (4);
		\draw (12) to (13);
		\draw (12) to (5);
		\draw (12) to (11);
		\draw (11) to (6);
		\draw (11) to (10);
		\draw (10) to (7);
		\draw (10) to (9);
		\draw (9) to (1);
		\draw (9) to (8);

\end{tikzpicture}
\end{subfigure}
    \caption{Two graphs whose genus polynomials have non-real roots}
    \label{fig:1}
\end{figure}
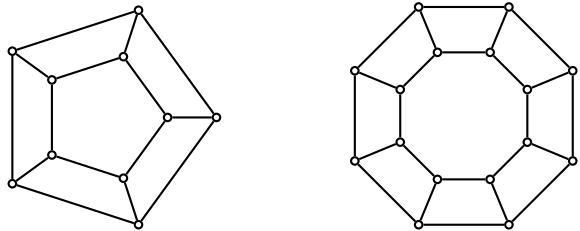

By computing the genus polynomials for all cubic graphs with up to 16 vertices, we identified a total of 226 such cubic graphs whose genus polynomials have non-real roots. The number of graphs of each order is summarized in Table 1. Since there are 40301 cubic graphs of order 18 and 510489 cubic graphs of order 20, it became infeasible to compute the number of these that have at least one non-real root. 

\begin{table}[htb]
\centering
\begin{tabular}{| c | c | c |} 
 \hline
 $n$ & \# of cubic graphs & total \# of \\ 
 ~ & with a non-real root & cubic graphs\\
 \hline
 10 & 1 & 19 \\ 
 12 & 5 & 85  \\
 14 & 26 & 509  \\
 16 & 194 & 4060  \\
 \hline
\end{tabular}
\caption{The number of cubic graphs of order $n$ whose genus polynomial has a non-real root, and the total number of cubic graphs, for $n=10,12,14,16$.}
\label{table1}
\end{table}

\section{Real Roots and Log-Concavity}

The question of whether all genus polynomials have real roots was of great interest to Stahl and others due to the connection to the log-concavity of the genus polynomial. 

Stahl's conjecture on the real-rootedness of genus polynomials, if true, would have confirmed the LCGD Conjecture of Gross, Robbins and Tucker. This is due to the log-concavity of polynomials with positive coefficients being preserved under multiplication. This fact is well known, with a nice proof given by Stanley \cite{Stanley}.

\begin{theorem}
If $A(x)$ and $B(x)$ are log-concave polynomials with non-negative coefficients and no internal zero coefficients, then so is $A(x)B(x)$.
\end{theorem}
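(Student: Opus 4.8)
The plan is to translate the statement about polynomials into one about total positivity of Toeplitz matrices, where the hypotheses become exactly a condition preserved under the matrix product that encodes polynomial multiplication. Write $A(x)=\sum_i a_i x^i$, $B(x)=\sum_j b_j x^j$, $C(x)=A(x)B(x)=\sum_k c_k x^k$ with $c_k=\sum_i a_i b_{k-i}$, extending $a_i,b_j$ by $0$ outside the supports of $A$ and $B$, which by hypothesis are non-empty finite integer intervals. That $C$ again has non-negative coefficients and no internal zeros is the easy part: its support is precisely the interval $[\min\operatorname{supp}A+\min\operatorname{supp}B,\ \max\operatorname{supp}A+\max\operatorname{supp}B]$, and on that interval $c_k$ is a sum of products of non-negative reals at least one of which is positive. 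So the real task is the log-concavity inequality $c_k^2\ge c_{k-1}c_{k+1}$ for all $k$.

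First I would prove a ``concentration'' lemma for a single sequence: if $(a_i)$ is non-negative with no internal zeros and $a_i^2\ge a_{i-1}a_{i+1}$ for all $i$, then $a_{u'}a_{v'}\ge a_u a_v$ whenever $u\le u'\le v'\le v$ and $u+v=u'+v'$, i.e.\ among products $a_p a_q$ with $p+q$ fixed the most balanced pair wins. If $a_ua_v=0$ this is trivial; otherwise $u,v$, hence all of $u,u',v',v$, lie in the support interval, so the consecutive ratios $\rho_j=a_{j+1}/a_j$ are well defined and positive there and, by log-concavity, non-increasing; writing $a_{u'}a_{v'}/(a_ua_v)$ as a product of $u'-u=v-v'$ factors of the form $\rho_{u+\ell}/\rho_{v'+\ell}$ with $u+\ell\le v'+\ell$ shows each factor is $\ge1$. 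This lemma is the technical heart of the argument, and its use of ``no internal zeros''---to keep the intermediate terms nonzero so the ratios exist---is the one spot where that hypothesis is genuinely needed; getting the index bookkeeping straight when matching it to the minors below is where I expect to have to be most careful.

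Finally I would introduce, for a polynomial $P=\sum p_i x^i$, the (bi-infinite but row- and column-finite) Toeplitz matrix $T_P$ with $(T_P)_{r,s}=p_{s-r}$, check the identity $T_C=T_A T_B$, and note that the $2\times 2$ submatrix of $T_C$ on rows $\{0,1\}$ and columns $\{k,k+1\}$ has determinant exactly $c_k^2-c_{k-1}c_{k+1}$. By the Cauchy--Binet formula---valid here since only finitely many index sets contribute---this determinant equals
\[
\sum_{t_1<t_2}\bigl(a_{t_1}a_{t_2-1}-a_{t_2}a_{t_1-1}\bigr)\bigl(b_{k-t_1}b_{k+1-t_2}-b_{k+1-t_1}b_{k-t_2}\bigr),
\]
a sum of the corresponding $2\times2$ Toeplitz minors of $T_A$ and $T_B$. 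For each $t_1<t_2$, the pair $\{t_1,t_2-1\}$ is the balanced pair lying between $\{t_1-1,t_2\}$, and likewise $\{k+1-t_2,k-t_1\}$ lies between $\{k-t_2,k+1-t_1\}$, so the concentration lemma (applied to $(a_i)$ and to $(b_j)$ respectively) makes both factors non-negative; hence every summand, and so the whole sum, is non-negative. This gives $c_k^2\ge c_{k-1}c_{k+1}$ and, with the earlier observations about the support of $C$, the theorem. (If one prefers to avoid quoting Cauchy--Binet, the displayed identity can be verified by expanding $c_k^2-c_{k-1}c_{k+1}$ and reindexing the four-fold sum, but the matrix viewpoint is cleaner and shows what is really going on.)
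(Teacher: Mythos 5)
Your proof is correct: the reduction of log-concavity (with no internal zeros) to the $2\times 2$ Toeplitz minors, the concentration lemma $a_{u'}a_{v'}\ge a_ua_v$ for balanced index pairs, and the Cauchy--Binet expansion of $c_k^2-c_{k-1}c_{k+1}$ into products of non-negative minors of $T_A$ and $T_B$ all check out, and the easy part about the support of $C$ being an interval is handled properly. Note, however, that the paper itself does not prove this theorem at all --- it simply cites Stanley for a ``nice proof'' --- so there is no in-paper argument to compare against; your argument is exactly the classical total-positivity ($\mathrm{TP}_2$/P\'olya frequency) proof that the citation points to, so you have in effect supplied the omitted proof rather than found an alternative to it. The one place worth an explicit sentence in a final write-up is the application of the concentration lemma to minors whose indices fall outside the support (e.g.\ $a_{t_1-1}$ with $t_1-1<\min\operatorname{supp}A$), where the inequality holds for the trivial reason that the subtracted product vanishes; you flag this case in the lemma, so nothing is missing.
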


It is clear that any linear function $ax+b$ with $a,b\in \mathbb{R}_+$, is log-concave. If a polynomial with positive coefficients is real-rooted, then its roots are nonpositive and it can be factored as $P(x) = a\prod_{j=1}^{n} (x+b_j)$, where $a>0$ and each factor $x+b_j$ is log-concave. Thus, Theorem 1 implies that $P(x)$ is log-concave. 

However, Theorem 1 implies log-concavity of some polynomials with complex roots. 

\begin{proposition}
Let $P(x)$ be a polynomial with non-negative real coefficients. If every complex root $z$ of $P$ lies in the cone $|\Re(z)| \geq \frac{1}{\sqrt{3}}|\Im(z)|$ with $\Re(z) \leq 0$, then $P(x)$ is log-concave. 
\end{proposition}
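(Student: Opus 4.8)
The plan is to factor $P$ over $\mathbb{R}$, recognize that the cone hypothesis is precisely what makes each real quadratic factor log-concave, and then invoke Theorem~1 (log-concavity is preserved under products of polynomials with non-negative coefficients and no internal zeros).

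\textbf{Reduction.} If $P\equiv 0$ or $P$ is a nonzero constant the claim is trivial, so assume $\deg P\ge 1$; then the leading coefficient is positive since the coefficients are non-negative and $P\not\equiv 0$. I would first peel off the zero root: write $P(x)=a\,x^m Q(x)$ where $a>0$, $m\ge 0$ is the multiplicity of $0$ as a root, and $Q(0)\ne 0$. Multiplying a log-concave polynomial with non-negative coefficients and no internal zeros by $x^m$ preserves all three properties, so it suffices to prove that $Q$ is log-concave with non-negative coefficients and no internal zeros.

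\textbf{Factorization and the key computation.} Factor $Q$ over $\mathbb{R}$. Its real roots are all strictly negative: $0$ is excluded because $Q(0)\ne 0$, and the hypothesis $\Re(z)\le 0$ forbids positive real roots; such roots contribute linear factors $x+b_j$ with $b_j>0$, whose coefficient sequence $(b_j,1)$ is log-concave with no internal zero. The non-real roots occur in conjugate pairs $-\alpha_k\pm i\beta_k$ with $\alpha_k=|\Re|\ge 0$ and $\beta_k=|\Im|>0$; the cone hypothesis $|\Re|\ge\tfrac1{\sqrt3}|\Im|$ gives $\alpha_k\ge\tfrac1{\sqrt3}\beta_k>0$, so each such pair contributes a real quadratic factor $x^2+2\alpha_k x+(\alpha_k^2+\beta_k^2)$ with \emph{all} coefficients positive (in particular no internal zero). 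The single log-concavity inequality for this quadratic, at its middle coefficient, is $(2\alpha_k)^2\ge \alpha_k^2+\beta_k^2$, i.e. $3\alpha_k^2\ge\beta_k^2$, which is exactly the cone condition $\alpha_k\ge\tfrac1{\sqrt3}\beta_k$. Hence $Q(x)=c\prod_j(x+b_j)\prod_k\bigl(x^2+2\alpha_k x+(\alpha_k^2+\beta_k^2)\bigr)$ with $c>0$, and every factor is log-concave with non-negative coefficients and no internal zeros.

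\textbf{Conclusion.} Applying Theorem~1 inductively across this product (its conclusion re-establishes the ``no internal zeros'' property at each step, so the induction goes through), $Q$ is log-concave with non-negative coefficients and no internal zeros; therefore so is $P=a\,x^mQ$. I do not expect a genuine obstacle: the whole argument reduces to the elementary equivalence $3\alpha^2\ge\beta^2\iff(2\alpha)^2\ge\alpha^2+\beta^2$ plus Theorem~1. The only points needing care are the reduction to $Q(0)\ne 0$ (so that Theorem~1's internal-zero hypothesis is actually available) and the remark that the cone condition rules out a purely imaginary root, which is what guarantees $\alpha_k>0$ and hence a genuinely positive middle coefficient in each quadratic factor.
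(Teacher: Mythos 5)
Your proof is correct and follows essentially the same route as the paper: factor over $\mathbb{R}$ into linear and irreducible quadratic factors, observe that the cone condition is exactly the log-concavity inequality $b^2\ge c$ for each quadratic $x^2+bx+c$ (your $(2\alpha)^2\ge\alpha^2+\beta^2$ is the same inequality in different notation), and conclude via Theorem~1. Your additional care about zero roots, internal zeros, and purely imaginary roots is a minor refinement that the paper's proof glosses over.
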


\begin{proof}
The polynomial $P(x)$ can be factored over $\mathbb{R}$ into linear and irreducible quadratic factors. Each real root is non-positive. This implies that each linear factor is log-concave. Each quadratic factor $x^2+bx+c$ has roots
$$z = \frac{-b\pm \sqrt{b^2-4c}}{2}$$
with $b^2-4c<0$. Since we have that $|\Re(z)| \geq \frac{1}{\sqrt{3}}|\Im(z)|$, it must be the case that 
$$|\Re(z)| = \frac{|b|}{2} \geq \frac{1}{\sqrt{3}}\frac{\sqrt{4c-b^2}}{2} = \frac{1}{\sqrt{3}}|\Im(z)|.$$

Note that $\Re(z) \leq 0$ guarantees that $b\geq 0$. 

Now, we have that
$$b \geq \sqrt{\frac{4c-b^2}{3}},$$
which is equivalent to $b^2 \geq c$. Therefore, each quadratic factor $x^2+bx+c$ is log-concave and $P(x)$ is a product of log-concave linear and quadratic polynomials with non-negative coefficients. By Theorem 1, $P(x)$ is log-concave. 
\end{proof}

One may be tempted to conjecture that the complex roots of genus polynomials lie in the cone indicated in Proposition 1. However, we have identified several cubic graphs, of orders 16 to 24, whose genus polynomials have roots that lie outside of the cone in Proposition 1. We include these graphs in Figure 2 and the details of the calculations in Section 3.

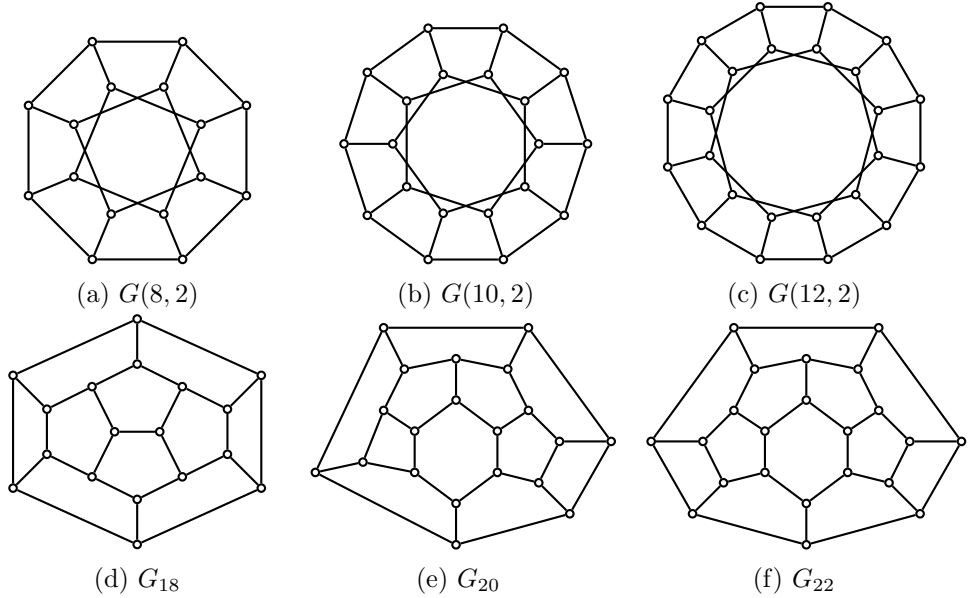
\begin{figure}[htb]
\centering
\begin{subfigure}[t]{0.31\textwidth}
\centering
\begin{tikzpicture}[thick,every node/.style={circle, draw=black, fill=white, inner sep=1}]
		\node  (0) at (0.6*-1,0.6*2.4142) {};
		\node  (1) at (0.6*1,0.6*2.4142) {};
		\node  (2) at (0.6*-2.4142,0.6*1) {};
		\node  (3) at (0.6*-2.4142,0.6*-1) {};
		\node  (4) at (0.6*-1,0.6*-2.4142) {};
		\node  (5) at (0.6*1,0.6*-2.4142) {};
		\node  (6) at (0.6*2.4142,0.6*-1) {};
		\node  (7) at (0.6*2.4142,0.6*1) {};
		\node  (8) at (0.35*-1,0.35*2.4142) {};
		\node  (9) at (0.35*1,0.35*2.4142) {};
		\node  (10) at (0.35*-2.4142,0.35*1) {};
		\node  (11) at (0.35*-2.4142,0.35*-1) {};
		\node  (12) at (0.35*-1,0.35*-2.4142) {};
		\node  (13) at (0.35*1,0.35*-2.4142) {};
		\node  (14) at (0.35*2.4142,0.35*-1) {};
		\node  (15) at (0.35*2.4142,0.35*1) {};
	
		\draw (0) to (1);
		\draw (1) to (7);
		\draw (7) to (6);
		\draw (6) to (5);
		\draw (5) to (4);
		\draw (4) to (3);
		\draw (3) to (2);
		\draw (2) to (0);
		\draw (0) to (8);
		\draw (9) to (1);
		\draw (15) to (7);
		\draw (14) to (6);
		\draw (13) to (5);
		\draw (12) to (4);
		\draw (3) to (11);
		\draw (2) to (10);
		\draw (8) to (11);
		\draw (11) to (13);
		\draw (13) to (15);
		\draw (15) to (8);
		\draw (10) to (9);
		\draw (9) to (14);
		\draw (14) to (12);
		\draw (12) to (10);
	
\end{tikzpicture}
\caption{$G(8,2)$}
\end{subfigure}
\hspace{2mm}
\begin{subfigure}[t]{0.31\textwidth}
\centering
\begin{tikzpicture}[thick,every node/.style={circle, draw=black, fill=white, inner sep=1}]
	
		\node  (0) at (-1.30902,0.951057) {};
		\node  (1) at (-0.5,1.53884) {};
		\node  (2) at (0.5,1.53884) {};
		\node  (3) at (1.30902,0.951057) {};
		\node  (4) at (1.618034,0) {};
		\node  (5) at (1.30902,-0.951057) {};
		\node  (6) at (0.5,-1.53884) {};
		\node  (7) at (-0.5,-1.53884) {};
		\node  (8) at (-1.30902,-0.951057) {};
		\node  (9) at (-1.618034,0) {};
		\node  (10) at (0.6*-1.30902,0.6*0.951057) {};
		\node  (11) at (0.6*-0.5,0.6*1.53884) {};
		\node  (12) at (0.6*0.5,0.6*1.53884) {};
		\node  (13) at (0.6*1.30902,0.6*0.951057) {};
		\node  (14) at (0.6*1.618034,0) {};
		\node  (15) at (0.6*1.30902,0.6*-0.951057) {};
		\node  (16) at (0.6*0.5,0.6*-1.53884) {};
		\node  (17) at (0.6*-0.5,0.6*-1.53884) {};
		\node  (18) at (0.6*-1.30902,0.6*-0.951057) {};
		\node  (19) at (0.6*-1.618034,0) {};
	
		\draw (0) to (1);
		\draw (1) to (2);
		\draw (2) to (3);
		\draw (3) to (4);
		\draw (4) to (5);
		\draw (5) to (6);
		\draw (6) to (7);
		\draw (7) to (8);
		\draw (8) to (9);
		\draw (9) to (0);
		\draw (0) to (10);
		\draw (10) to (12);
		\draw (12) to (2);
		\draw (12) to (14);
		\draw (14) to (4);
		\draw (14) to (16);
		\draw (6) to (16);
		\draw (16) to (18);
		\draw (18) to (8);
		\draw (18) to (10);
		\draw (19) to (9);
		\draw (19) to (11);
		\draw (11) to (1);
		\draw (11) to (13);
		\draw (13) to (3);
		\draw (13) to (15);
		\draw (15) to (5);
		\draw (15) to (17);
		\draw (17) to (7);
		\draw (17) to (19);
	
\end{tikzpicture}
\caption{$G(10,2)$}
\end{subfigure}
\hspace{2mm}
\begin{subfigure}[t]{0.31\textwidth}
\centering
\begin{tikzpicture}[thick,every node/.style={circle, draw=black, fill=white, inner sep=1}]
	
		\node  (0) at (0.9*-0.5,0.9*1.86603) {};
		\node  (2) at (0.9*0.5, 0.9*1.86603) {};
		\node  (3) at (0.9*1.36603,0.9*1.36603) {};
		\node  (4) at (0.9*1.86603,0.9*0.5) {};
		\node  (5) at (0.9*1.86603,0.9*-0.5) {};
		\node  (6) at (0.9*1.36603,0.9*-1.36603) {};
		\node  (7) at (0.9*0.5,0.9*-1.86603) {};
		\node  (8) at (0.9*-0.5,0.9*-1.86603) {};
		\node  (9) at (0.9*-1.36603,0.9*-1.36603) {};
		\node  (10) at (0.9*-1.86603,0.9*-0.5) {};
		\node  (11) at (0.9*-1.86603,0.9*0.5) {};
		\node  (12) at (0.9*-1.36603,0.9*1.36603) {};
		\node  (13) at (0.6*-0.5,0.6*1.86603) {};
		\node  (15) at (0.6*0.5, 0.6*1.86603) {};
		\node  (16) at (0.6*1.36603,0.6*1.36603) {};
		\node  (17) at (0.6*1.86603,0.6*0.5) {};
		\node  (18) at (0.6*1.86603,0.6*-0.5) {};
		\node  (19) at (0.6*1.36603,0.6*-1.36603) {};
		\node  (20) at (0.6*0.5,0.6*-1.86603) {};
		\node  (21) at (0.6*-0.5,0.6*-1.86603) {};
		\node  (22) at (0.6*-1.36603,0.6*-1.36603) {};
		\node  (23) at (0.6*-1.86603,0.6*-0.5) {};
		\node  (24) at (0.6*-1.86603,0.6*0.5) {};
		\node  (25) at (0.6*-1.36603,0.6*1.36603) {};
	
		\draw (12) to (0);
		\draw (2) to (3);
		\draw (3) to (4);
		\draw (4) to (5);
		\draw (5) to (6);
		\draw (6) to (7);
		\draw (7) to (8);
		\draw (8) to (9);
		\draw (9) to (10);
		\draw (10) to (11);
		\draw (11) to (12);
		\draw (12) to (25);
		\draw (13) to (0);
		\draw (15) to (2);
		\draw (16) to (3);
		\draw (17) to (4);
		\draw (18) to (5);
		\draw (19) to (6);
		\draw (20) to (7);
		\draw (21) to (8);
		\draw (15) to (17);
		\draw (17) to (19);
		\draw (19) to (21);
		\draw (21) to (23);
		\draw (0) to (2);
		\draw (23) to (25);
		\draw (24) to (11);
		\draw (23) to (10);
		\draw (22) to (9);
		\draw (25) to (15);
		\draw (13) to (16);
		\draw (16) to (18);
		\draw (18) to (20);
		\draw (20) to (22);
		\draw (22) to (24);
		\draw (24) to (13);
	
\end{tikzpicture}

\caption{$G(12,2)$}
\end{subfigure}

\begin{subfigure}[t]{0.31\textwidth}
\centering
\begin{tikzpicture}[thick,every node/.style={circle, draw=black, fill=white, inner sep=1}]
	\node  (0) at (0.6*-5, 0.6*-7.25) {};
		\node  (1) at (0.6*-4.25, 0.6*-8) {};
		\node  (2) at (0.6*-2.25, 0.6*-6) {};
		\node  (3) at (0.6*-2.25, 0.6*-7) {};
		\node  (4) at (0.6*-3.25, 0.6*-7.5) {};
		\node  (5) at (0.6*-2.75, 0.6*-8.5) {};
		\node  (6) at (0.6*-1.75, 0.6*-8.5) {};
		\node  (7) at (0.6*-1.25, 0.6*-7.5) {};
		\node  (8) at (0.6*0.5, 0.6*-7.25) {};
		\node  (9) at (0.6*-0.25, 0.6*-8) {};
		\node  (10) at (0.6*-0.25, 0.6*-9) {};
		\node  (11) at (0.6*-1.25, 0.6*-9.5) {};
		\node  (12) at (0.6*0.5, 0.6*-9.75) {};
		\node  (13) at (0.6*-4.25, 0.6*-9) {};
		\node  (14) at (0.6*-5, 0.6*-9.75) {};
		\node  (15) at (0.6*-3.25, 0.6*-9.5) {};
		\node  (16) at (0.6*-2.25, 0.6*-10) {};
		\node  (17) at (0.6*-2.25, 0.6*-11) {};
	
		\draw (0) to (1);
		\draw (1) to (4);
		\draw (4) to (3);
		\draw (3) to (2);
		\draw (2) to (0);
		\draw (4) to (5);
		\draw (5) to (6);
		\draw (6) to (7);
		\draw (7) to (3);
		\draw (2) to (8);
		\draw (8) to (9);
		\draw (9) to (7);
		\draw (6) to (11);
		\draw (11) to (10);
		\draw (10) to (9);
		\draw (8) to (12);
		\draw (12) to (10);
		\draw (14) to (13);
		\draw (13) to (1);
		\draw (0) to (14);
		\draw (14) to (17);
		\draw (17) to (12);
		\draw (17) to (16);
		\draw (16) to (11);
		\draw (16) to (15);
		\draw (15) to (5);
		\draw (15) to (13);
	
\end{tikzpicture}
\caption{$G_{18}$}
\end{subfigure}
\hspace{1mm}
\begin{subfigure}[t]{0.31\textwidth}
\centering
\begin{tikzpicture}[thick,every node/.style={circle, draw=black, fill=white, inner sep=1}]
	
		\node   (0) at (0.55*-1, 0.55*-1) {};
		\node  (1) at (0.55*2.5, 0.55*-1) {};
		\node  (2) at (0.55*-0.5, 0.55*-2) {};
		\node  (3) at (0.55*2, 0.55*-2) {};
		\node  (4) at (0.55*0.75, 0.55*-1.75) {};
		\node   (5) at (0.55*-1, 0.55*-3) {};
		\node  (6) at (0.55*0.75, 0.55*-2.75) {};
		\node  (7) at (0.55*-0.25, 0.55*-3.5) {};
		\node  (8) at (0.55*2.5, 0.55*-3) {};
		\node  (9) at (0.55*1.75, 0.55*-3.5) {};
		\node  (10) at (0.55*4.5, 0.55*-3.75) {};
		\node  (11) at (0.55*3.25, 0.55*-3.75) {};
		\node  (12) at (0.55*2.75, 0.55*-4.75) {};
		\node  (13) at (0.55*1.75, 0.55*-4.5) {};
		\node  (14) at (0.55*3.5, 0.55*-5.5) {};
		
		\node  (17) at (0.55*-2.65, 0.55*-4.5) {};
		\node  (18) at (0.55*-1.5, 0.55*-4.25) {};
		\node   (19) at (0.55*0.75, 0.55*-6.25) {};
		\node  (20) at (0.55*-0.25, 0.55*-4.5) {};
		\node  (21) at (0.55*0.75, 0.55*-5.25) {};
	
		\draw (0) to (1);
		\draw (1) to (3);
		\draw (3) to (4);
		\draw (4) to (2);
		\draw (2) to (0);
		\draw (2) to (5);
		\draw (5) to (7);
		\draw (7) to (6);
		\draw (6) to (4);
		\draw (3) to (8);
		\draw (8) to (9);
		\draw (9) to (6);
		\draw (1) to (10);
		\draw (10) to (11);
		\draw (11) to (8);
		\draw (11) to (12);
		\draw (12) to (13);
		\draw (13) to (9);
		\draw (10) to (14);
		\draw (14) to (12);
		\draw (17) to (0);
		\draw (5) to (18);
		\draw (19) to (14);
		\draw (19) to (17);
		\draw (17) to (18);
		\draw (7) to (20);
		\draw (18) to (20);
		\draw (19) to (21);
		\draw (21) to (13);
		\draw (21) to (20);
	
\end{tikzpicture}

\caption{$G_{20}$}
\end{subfigure}
\hspace{3mm}
\begin{subfigure}[t]{0.31\textwidth}
\centering
\begin{tikzpicture}[thick,every node/.style={circle, draw=black, fill=white, inner sep=1}]
	
		\node  (0) at (0.55*-1, 0.55*-1) {};
		\node  (1) at (0.55*2.5, 0.55*-1) {};
		\node  (2) at (0.55*-0.5, 0.55*-2) {};
		\node  (3) at (0.55*2, 0.55*-2) {};
		\node  (4) at (0.55*0.75, 0.55*-1.75) {};
		\node  (5) at (0.55*-1, 0.55*-3) {};
		\node  (6) at (0.55*0.75, 0.55*-2.75) {};
		\node  (7) at (0.55*-0.25, 0.55*-3.5) {};
		\node  (8) at (0.55*2.5, 0.55*-3) {};
		\node  (9) at (0.55*1.75, 0.55*-3.5) {};
		\node  (10) at (0.55*4.5, 0.55*-3.75) {};
		\node  (11) at (0.55*3.25, 0.55*-3.75) {};
		\node  (12) at (0.55*2.75, 0.55*-4.75) {};
		\node  (13) at (0.55*1.75, 0.55*-4.5) {};
		\node  (14) at (0.55*3.5, 0.55*-5.5) {};
		\node  (15) at (0.55*-3, 0.55*-3.75) {};
		\node  (16) at (0.55*-1.75, 0.55*-3.75) {};
		\node  (17) at (0.55*-2, 0.55*-5.5) {};
		\node  (18) at (0.55*-1.25, 0.55*-4.75) {};
		\node  (19) at (0.55*0.75, 0.55*-6.25) {};
		\node  (20) at (0.55*-0.25, 0.55*-4.5) {};
		\node  (21) at (0.55*0.75, 0.55*-5.25) {};
	
		\draw (0) to (1);
		\draw (1) to (3);
		\draw (3) to (4);
		\draw (4) to (2);
		\draw (2) to (0);
		\draw (2) to (5);
		\draw (5) to (7);
		\draw (7) to (6);
		\draw (6) to (4);
		\draw (3) to (8);
		\draw (8) to (9);
		\draw (9) to (6);
		\draw (1) to (10);
		\draw (10) to (11);
		\draw (11) to (8);
		\draw (11) to (12);
		\draw (12) to (13);
		\draw (13) to (9);
		\draw (10) to (14);
		\draw (14) to (12);
		\draw (15) to (0);
		\draw (15) to (16);
		\draw (16) to (5);
		\draw (17) to (15);
		\draw (16) to (18);
		\draw (19) to (14);
		\draw (19) to (17);
		\draw (17) to (18);
		\draw (7) to (20);
		\draw (18) to (20);
		\draw (19) to (21);
		\draw (21) to (13);
		\draw (21) to (20);
	\end{tikzpicture}
\caption{$G_{22}$}
\end{subfigure}
\caption{Six graphs whose genus polynomial has non-real roots satisfying $|\Re(z)| < |\Im(z)|/\sqrt{3}$.}
\end{figure}

\section{Examples}

The graph in Figure 2(a) is the generalized Petersen graph $G(8,2)$. Its genus polynomial is 
$$\Gamma_{G(8,2)}(x) = 39840x^4+23536x^3+2074x^2+84x+2$$
which is easily shown to be log-concave. It has two real roots, approximately equal to $-0.0572570083$ and $-0.4935182253$, and one pair of non-real roots, approximately equal to $-0.01999390944\pm 0.03710524561i$. By direct calculation, we see that 

$$\frac{1}{\sqrt{3}}(0.03710524561) \approx{} 0.02142272354 > 0.01999390944$$

Factoring $\Gamma_{G(8,2)}(x)$ over the real numbers, we get the following factors. 
\begin{equation*}
\begin{aligned}
\Gamma_{G(8,2)}(x) \approx{} & (x + 0.0572570083)(x+0.4935182253)\\
 & (x^2+0.03998781888x + 0.001776555666)
\end{aligned}
\end{equation*}

By direct computation, we can see that $$(0.03998781888)^2 \approx{} 0.00159902565
 < 0.001776555666$$ and thus the quadratic factor of $\Gamma_{G(8,2)}(x)$ above is not log-concave. 

Similar computation shows that the same is true for the other five graphs in Figure 2, which all have log-concave genus polynomials. Table 2 gives a summary of these results, including the values of the non-real roots of each genus polynomial and their corresponding non-log-concave quadratic factors, rounded to 11 decimal places. 

\begin{table}[ht]
\centering
\begin{tabular}{| c | c | c |} 
 \hline
 $G(8,2)$  & $\Gamma_{G(8,2)}(x)$ & $39840x^4+23536x^3+2074x^2+84x+2$ \\ 
 \hline
 ~ & Complex roots &  $-0.01999390944\pm 0.03710524561i$ \\
 \hline
 ~ & $|\Im(z)|/\sqrt{3}$ & $0.02142272354$  \\
 \hline
 ~ & Quadratic factor & $x^2+0.03998781888x+0.001776555666$  \\
 \hline
 \hline
 $G_{18}$ & $\Gamma_{G_{18}}(x)$ & $54272x^5 + 165824x^4+39472x^3$\\
 ~ & ~ & $+2480x^2+94x+2$ \\
 \hline
 ~ & Complex roots &  $-0.01496753672\pm 0.038599441i$ \\
 \hline
 ~ & $|\Im(z)|/\sqrt{3}$ & $0.022285398$  \\
 \hline
 ~ & Quadratic factor & $x^2+0.02993507344x+0.001713944001$  \\
 \hline
 \hline
 $G(10,2)$  & $\Gamma_{G(10,2)}(x)$ & $587040x^5 + 411400x^4+47540x^3$ \\
 ~ & ~ & $+2494x^2+100x+2$ \\ 
 \hline
 ~ & Complex roots &  $-0.00896278346 \pm 0.04522812336i$ \\
 \hline
 ~ & $|\Im(z)|/\sqrt{3}$ & $0.0261124692$  \\
 \hline
 ~ & Quadratic factor &  $x^2 + 0.01792556692x+0.00212591463$ \\
 \hline
 \hline
 $G_{20}$  & $\Gamma_{G_{20}}(x)$ & $557248x^5+431656x^4+56602x^3$ \\
 ~ & ~ & $+2964x^2+104x+2$ \\ 
 \hline
 ~ & Complex roots &  $-0.011539073495\pm 0.0389911954i$ \\
 \hline
 ~ & $|\Im(z)|/\sqrt{3}$ & $0.0225115771616$  \\
 \hline
 ~ & Quadratic factor &  $x^2+0.023078147x+0.001653463536$ \\
 \hline
 \hline
 $G_{22}$  & $\Gamma_{G_{22}}(x)$ & $692224x^6+2570304x^5+851384x^4$ \\
 ~ & ~ & $+76726x^3+3550x^2+114x+2$ \\ 
 \hline
 ~ & Complex roots &  $-0.0085736029\pm 0.03859372887i$ \\
 \hline
 ~ & $|\Im(z)|/\sqrt{3}$ & $0.02228209975$  \\
 \hline
 ~ & Quadratic factor & $x^2+0.0171472058x+0.001562982575$  \\
 \hline
 \hline
 $G(12,2)$  & $\Gamma_{G(12,2)}(x)$ & $8309664x^6 + 7244496x^5+1144338x^4$ \\
 ~ & ~ & $+75088x^3+3508x^2+120x+2$ \\ 
 \hline
 ~ & Complex roots &  $-0.002315938876 \pm 0.04585954927i$ \\
 \hline
 ~ & $|\Im(z)|/\sqrt{3}$ & $0.02647702312$  \\
  \hline
 ~ & Quadratic factor &  $x^2 + 0.004631877752x+0.002108461832$ \\
 \hline
\end{tabular}
\caption{Summary of the genus polynomials and their roots for the graphs in Figure 2.}
\label{table2}
\end{table}

\section{Conclusion}

We have identified a total of six graphs whose genus polynomials have non-real roots satisfying $|\Re(z)| < |\Im(z)|/\sqrt{3}$ and therefore have a non-log-concave quadratic factor when factored over the real numbers. However, we are limited in our ability to find infinite classes of such examples by the small number of graph classes whose genus distributions are known. One interesting feature of these graphs is that they are all planar and 3-connected, hence the constant coefficient of their genus polynomials is equal to 2. 

\section*{Acknowledgements} 

The authors would like to thank Austin Ulrigg for identifying and correcting errors in Table 1.

\bibliography{NoteRoots}

\end{document}